\theoremstyle{plain}
\newtheorem{theorem}{Theorem}[section]
\theoremstyle{definition}
\newtheorem{remark}[theorem]{Remark}
\newtheorem{example}[theorem]{Example}
\newtheorem{definition}[theorem]{Definition}
\newtheorem*{condition}{Condition}
\newcommand{\field}[1]{\mathbb{#1}}
\newcommand{\NN}{\field{N}}
\DeclareSymbolFont{cyrillic}{T2A}{cmr}{m}{n}
\DeclareMathSymbol{\D}{\mathalpha}{cyrillic}{196}
\newcommand{\N}{\mathbb{N}}
\newcommand{\R}{\mathbb{R}}
\def\I{\ensuremath{{\bf 1}}}
\newcommand{\dist}{\operatorname{{dist}}}
\def\R{\ensuremath{\mathbb R}}
\def\N{\ensuremath{\mathbb N}}
\def\I{\ensuremath{{\bf 1}}}
\def\e{{\ensuremath{\rm e}}}
\def\B{\ensuremath{\mathcal B}}
\def\l{{\rm Leb}}
\def\p{\ensuremath{\mathbb P}}
\def\AA{\ensuremath{\mathcal A}}
\def\X{\mathcal{X}}
\def\ie{{\em i.e.}, }
\def\dist{\ensuremath{\text{dist}}}
\def\eps{\varepsilon}
\def\cv{\ensuremath{\text {Cor}}}
\newcommand{\dif}{\mathrm{d}}
\author[D. Azevedo]{Davide Azevedo}
\address{Davide Azevedo\\Instytut Matematyczny Polskiej Akademii Nauk, ul. \'Sniadeckich 8\\ 00-656 Warszawa\\Polska}
\email {davidemsa@gmail.com}
\author[A. C. M. Freitas]{Ana Cristina Moreira Freitas}
\address{Ana Cristina Moreira Freitas\\ Centro de Matem\'{a}tica \&
Faculdade de Economia da Universidade do Porto\\ Rua Dr. Roberto Frias \\
4200-464 Porto\\ Portugal} \email{amoreira@fep.up.pt}
\author[J. M. Freitas]{Jorge Milhazes Freitas}
\address{Jorge Milhazes Freitas\\ Centro de Matem\'{a}tica da Universidade do Porto\\ Rua do
Campo Alegre 687\\ 4169-007 Porto\\ Portugal}
\email{jmfreita@fc.up.pt}
\urladdr{http://www.fc.up.pt/pessoas/jmfreita}
\author[F.B. Rodrigues]{Fagner B. Rodrigues}
\address{Fagner Bernardini Rodrigues\\Instituto de Matem\'atica - Universidade Federal do Rio Grande do Sul\\
Av. Bento Gonçalves, 9500 - Prédio 43-111 - Agronomia\\
Caixa Postal 15080\
91509-900 Porto Alegre - RS - Brasil
} \email{fagnerbernardini@gmail.com }
\subjclass[2000]{37A50, 60G70, 37B20, 60G10, 37C25.}
\begin{document}

\title{Extreme Value Laws for dynamical systems with countable extremal sets}
\date{\today}


\maketitle
\begin{abstract}
We consider stationary stochastic processes arising from dynamical systems by evaluating a given observable along the orbits of the system. We focus on the extremal behaviour of the process, which is related to the entrance in certain regions of the phase space, which correspond to neighbourhoods of the maximal set $\mathcal M$, \ie the set of points where the observable is maximised. The main novelty here is the fact that we consider that the set $\mathcal M$ may have a countable number of points, which are associated by belonging to the orbit of a certain point, and may have accumulation points. In order to prove the existence of distributional limits and study the intensity of clustering, given by the Extremal Index, we generalise the conditions previously introduced in \cite{FFT12,FFT15}.   
\end{abstract}

\section{Introduction}

The study of Extreme Value Laws (EVL) for dynamical systems has received a lot of attention in the past few years. We refer to the recently published book \cite{LFFF16}, which makes an introduction to the subject and gives the latest developments in the field.

The main goal of this theory is to study the extremal properties of stochastic processes generated by the orbits of a dynamical system. To be more specific, we consider an observable function $\varphi$, choos an initial condition, let the system evolve and keep record of the values of this observable evaluated on the successive states that the system presents, along the evolution of this particular orbit. Then, we analyse such realisations of the stochastic process, which depend on the initial condition, in terms of their extremal behaviour, where we are particularly interested in the occurrence of abnormally high observations, exceeding high thresholds.

For chaotic systems, the sensitivity to initial conditions lends the stochastic processes just described an erratic behaviour that resembles the pure randomness of independent and identically distributed (iid) sequences of random variables studied in classical Extreme Value Theory.

However, the existence of periodic points was seen to create a strong dependence that turned out to be responsible for the appearance of clustering of exceedances of high thresholds.
In this setting the rare events corresponding to the exceedances of high thresholds correspond to the entrances of the orbit in certain regions of the phase space, where the observable function $\varphi$ is maximised. This connection between excedances and visits to certain target sets of the phase space is the base of the formal link established in \cite{FFT10,FFT11} between the existence of EVL and Hitting Time Statistics (HTS). By EVL we mean the distributional limit of the partial maxima of the considered stochastic processes and by HTS we mean the distributional limit for waiting time before hitting a certain target of the phase space.

In the literature of both EVL and HTS for dynamical systems, in most situations $\varphi$ is maximised on a certain single point $\zeta$ so that exceedances of increasingly high thresholds correspond to hits to shrinking neighbourhoods around $\zeta$. At first,  these neighbourhoods were dynamical cylinders and more recently they have been taken, more generally, as metric balls.

For hyperbolic and non-uniformly hyperbolic systems (including intermittent maps, multimodal quadratic maps, H\'enon maps, billiards) and for typical points $\zeta$, \ie for almost all $\zeta$ with respect to an invariant measure Sinai-Ruelle-Bowen (SRB) measure, the limiting laws that apply for both EVL and HTS have been proved to be the standard exponential distribution (with mean 1), see for example  \cite{HSV99, C01, BSTV03, HNT12, CC13,PS14}.

When $\zeta$ is a periodic point, then periodicity creates a short recurrence dependence structure which is responsible for the appearance of clustering of exceedances or hits to the neighbourhoods of $\zeta$ (usually, metric balls around $\zeta$). This means that the limiting law is exponential with a parameter $0\leq\theta<1$ (with mean $\theta^{-1}$). This parameter is called the Extremal Index (EI) and measures the intensity of clustering since, in most situations, $\theta^{-1}$ coincides with the average size of the cluster, \ie the average number of exceedances within a cluster. The existence of an EI was proved for hyperbolic systems, $\phi$-mixing systems, intermittent maps, Benedicks-Carleson quadratic maps, see for example: \cite{H93, A04, HV09, FFT12, FFT13}.

Moreover, in \cite{FFT12,FP12}, for uniformly expanding systems such as the doubling map, a dichotomy was shown which states that either $\zeta$ is periodic and we have an EI with a very precise formula depending on the expansion rate at $\zeta$, or for every non-periodic $\zeta$, we have an EI equal to 1 (which means no clustering). The dichotomy was obtained for more general systems such as: conformal repellers \cite{FP12}, systems with spectral gaps for the Perron-Frobenius operator \cite{K12}, mixing countable alphabet shifts \cite{KR14}, systems with strong decay of correlations \cite{AFV15} and intermittent maps \cite{FFTV15}.

Very recently, in \cite{AFFR16}, the authors considered the possibility of having a finite number of maximal points of $\varphi$. Moreover, it was shown that when these maximal points are correlated, in the sense of belonging to the same orbit of some point (not necessarily periodic), then clustering of exceedances is created by what turned out to be a mechanism that emulates some sort of fake periodic effect. The fact that the maximal points lay on the same orbit makes the occurrence of an excedance, which corresponds to an entrance in a neighbourhood of one of such points, followed by another exceedance, in a very short time period, a very likely event, which was the same effect observed at maximal periodic points. 

In this paper, we develop the theory further by letting the number of points where $\varphi$ is maximised to be infinite. Namely, we will consider countably many maximal points, which, in compact phase spaces as we consider here, have accumulation points which complicate the analysis. The tools used in \cite{AFFR16} were originally developed in \cite{FFT12} and later refined in \cite{FFT15}. They are based on some conditions on the dependence structure of the stochastic processes. 
These conditions are ultimately verified on account of the rates of decay of correlations of the systems. However, the conditions used there build upon the fact that the periodic or fake periodic effect creating the clustering has finite range. This derives from the fact that the period of both the periodic or fake periodic effect is finite since, in the first case, each periodic point has a finite period and, in the second case, the number of maximal points used to emulate periodicity is finite.  This means we need to make some adjustments to the conditions devised in \cite{FFT12,FFT15} to cope with infinite ``periods''. Another technical problem arises in the use of decay of correlations to prove the dependence conditions. Typically, the test functions plugged into the decay of correlations statements have finitely many connected components, which is not necessarily the case here. Hence, we need to play with adapted truncated function approximations that have to be chosen to balance the estimates and obtain the result.

In the recent paper, \cite{MP15}, the authors provide a very insightful numerical study that shows evidence that EVL can be proved for dynamical systems with observable functions maximised on Cantor sets. The techniques we introduce here can be used to provide a theoretical proof of some of the statements in \cite{MP15}. This is an ongoing work of the last three authors.

\section{The setting}

Take a system  $(\mathcal{X},\mathcal B,\mu,f)$, where $\mathcal{X}$ is a Riemannian manifold, $\mathcal{B}$ is the Borel
$\sigma$-algebra, $f:\mathcal{X}\to\mathcal{X}$ is a measurable map
and $\mu$ an $f$-invariant probability measure.
Suppose that the time series $X_0, X_1,\ldots$ arises from such a system simply by evaluating a given  observable $\varphi:\mathcal{X}\to\mathbb{R}\cup\{\pm\infty\}$ along the orbits of the system, or in other words, the time evolution given by successive iterations by $f$:
\begin{equation}
\label{eq:def-stat-stoch-proc-DS} X_n=\varphi\circ f^n,\quad \mbox{for
each } n\in {\mathbb N}.
\end{equation}
Clearly, $X_0, X_1,\ldots$ defined in this way is not necessarily an independent sequence.  However, $f$-invariance of $\mu$ guarantees that this stochastic process is stationary.

In this paper the novelty of the approach resides in the fact that instead of considering observables $\varphi:\X\to\R$ achieving a global maximum at a single point $\zeta\in\X$, as in the great majority of the literature about EVL and HTS for dynamical systems, or at a finite number of points $\xi_1, \ldots, \xi_k\in\X$, with $k\in\N$, as in \cite{HNT12,AFFR16}, we assume that the maximum is achieved on a countable set $\mathcal M=\{\xi_i\}_{i\in\N_0}$, which is the closure
of a subset of the orbit of some chosen point $\zeta\in \mathcal{X}$. More precisely, for a  certain point $\zeta\in\X$, we have that  $\mathcal M:=\overline{\{f^{m_i}(\zeta):i\in\mathbb{N}\}}$. For simplicity, we assume that $\xi_i=f^{m_i}(\zeta)$, for each $i\in\N$ and the sequence $\{\xi_i\}_{i\in\N}$ has only one accumulation point that we denote by $\xi_0$. Hence, for each $i\in\N_0$ we have that $\xi_i$ is an isolated point of $\mathcal M$. However, the important facts are that $\mathcal M$ is closed and countable. If none of the points of $\mathcal M$ lay on the same orbit then the analysis would actually be much simpler and in case there are more accumulation points of the orbit of some chosen point $\zeta\in \mathcal{X}$, then each such accumulation point would have to analysed as we will do here for $\xi_0$. Having more general accumulation sets, such as Cantor sets, raises new technical challenges, which we are leaving for an already mentioned ongoing work. 

We assume that the observable $\varphi$ also
satisfies
\begin{equation}\label{def:observable}
\varphi(x)=h_i(\dist (x,\xi_i)),\; \forall x\in B_{\varepsilon_i}(\xi_i),\;\;\; i\in\{1,2, \ldots\}
\end{equation}
where $B_{\varepsilon_i}(\xi_i) \cap B_{\varepsilon_j}(\xi_j)=\emptyset$, for all $i\neq j$, $\dist$ denotes some metric in $\X$ and  the function
$h_i:[0,+\infty)\to \mathbb{R}\cup\{\infty\}$ is such that 0 is a global maximum ($h_i(0)$ may be $+\infty$),
 $h_i$ is a strictly decreasing continuous bijection $h_i : V \to W$ in a neighbourhood $V$ of 0; and has one of the following three types of behaviour:

\begin{enumerate}
  \item  Type 1: there exists some strictly positive function $g : W \to \mathbb{R}$ such that for all $y \in\mathbb{R} $
\begin{equation*}
\label{eq:type1}
\lim_{ s\to h_i(0)}\frac{h_i^{-1}(s+yg(s))}{h_i^{-1}(s)}= e^{-ˆ'y};
\end{equation*}
 \item  Type 2: $h_i(0) = +\infty$ and there exists $\beta>0$ such that for all $y > 0$
 \begin{equation*}
\label{eq:type2}
\lim_{s\to\infty}\frac{h_i^{-1}(sy)}{h_i^{-1}(s)}=y^{-\beta};
 \end{equation*}
  \item Type 3: $h_i(0) = D < +\infty$ and there exists $\gamma > 0$ such that for all $y > 0$
  \begin{equation*}
  \label{eq:type3}
  \lim_{s\to0}\frac{ h^{-1}_i(Dˆ-sy)}{h^{-1}_i(D-ˆ's)}= y^\gamma.
  \end{equation*}
\end{enumerate}

We assume, of course, that $h_i(0)=h_j(0)$ for all $i,j\in\N_0$. Now, at $\xi_0$, we may have different types of behaviour. We may have, for example that $\varphi$ is continuous at $\xi_0$ or not. We will see examples of application of both types. One particular case of study, in which we have continuity of $\varphi$ at $\xi_0,$ is when we take:
\begin{equation}
\label{eq:dist-case}
\varphi(x)=h(\dist(x,\mathcal M)),
\end{equation}
where $h$ is of one of the three types above and $\dist(x,\mathcal M)=\inf\{\dist(x,y):\, y\in\mathcal M\}$. Note that in this case we may take $h_i=h$ for all $i\in\N$.

In order to study the extremal behaviour of the systems, we consider the random variables  $M_1, M_2,\ldots$ given by
\begin{equation}
\label{eq:Mn-definition}
M_n=\max\{X_0,\ldots,X_{n-1}\}.
\end{equation}

We say that we have an \emph{Extreme Value Law} (EVL) for $M_n$ if there is a non-degenerate d.f.\ $H:\R\to[0,1]$ with $H(0)=0$ and,  for every $\tau>0$, there exists a sequence of levels $u_n=u_n(\tau)$, $n=1,2,\ldots$,  such that
\begin{equation}
\label{eq:un}
  n\p(X_0>u_n)\to \tau,\;\mbox{ as $n\to\infty$,}
\end{equation}
and for which the following holds:
\begin{equation}
\label{eq:EVL-law}
\p(M_n\leq u_n)\to \bar H(\tau)=1-H(\tau),\;\mbox{ as $n\to\infty$.}
\end{equation}
where the convergence is meant at the continuity points of $H(\tau)$.

As described in \cite{F13,LFFF16} the study of the distributional limit for $M_n$ is tied to the occurrence of excedances, \ie the occurrence of events such as $\{X_j>u\}$, for some high threshold $u$, close to $u_F=\sup\{x: F(x)<1\}=\varphi(\xi_i)$, the right end of the support of the distribution function, $F$, of $X_0$. Note that the exceedances of $u$ correspond to hits of the orbits to the target set on $\X$ defined by:
\begin{equation*}
\label{def:U}
U(u):=\{x\in\mathcal{X}:\; \varphi(x)>u\}=\{X_0>u\}.
\end{equation*}

Observe that by the assumptions on $\varphi$ then $U(u)$ has possibly countably many connected components. Namely, we may write that
\begin{equation}
\label{eq:U-discontinuous}
U(u)=\bigcup_{j=1}^\infty B_{\varepsilon_j(u)}(\xi_j)\cup\{\xi_0\},
\end{equation}
where $B_{\varepsilon_j(u)}(\xi_j)$ denotes a ball of radius $\varepsilon_j(u)>0$ centred at $\xi_j$. Note that each $\eps_j(u)$ is determined by the function $h_j$ that applies to each $\xi_j$ in equation~\eqref{def:observable}.  These balls may overlap, such as when \eqref{eq:dist-case} holds,  in which case we can write that
\begin{equation}
\label{eq:U-continuous}
U(u)=\bigcup_{j=1}^{N(u)} B_{\varepsilon_j(u)}(\xi_j)\cup B_{\varepsilon_0(u)}(\xi_0),
\end{equation}
for some positive integer $N(u)$, which goes to $\infty$ as $u$ gets closer to $u_F$.

As usual, in order to avoid a non-degenerate limit for $M_n$ we  assume:
\begin{enumerate}
\item[{(R)}]
The quantity $\mu(U(u))$, as a function of $u$, varies continuously on a neighbourhood of $u_F$.
\end{enumerate}

\begin{remark}
\label{rem:R1}
Note that as long as the invariant measure has no atoms, then under the assumptions above on the observable we have that condition ($R$) is easily satisfied.
\end{remark}

\section{EVL with clustering caused by arbitrarily large periods}

In the study of extremes for dynamical systems, the appearance of clustering has been associated with the periodicity of a unique maximum of $\varphi$, as in \cite{FFT12}, or, more recently, with the fake periodicity borrowed by the existence of multiple correlated  maxima, as in \cite{AFFR16}. In both cases, the main idea to handle the short recurrence created by the periodic phenomena is to replace the events $U(u)$ by
\begin{align}
\label{eq:A-def}
\AA_{q}(u)&:=U(u)\cap\bigcap_{i=1}^{q}f^{-i}(U(u)^c)=\{X_0>u, X_1\leq u, \ldots, X_{q}\leq u\},
\end{align}
where we use the notation $A^c:=\mathcal X\setminus A$ for the complement of $A$ in $\X$, for all $A\in\mathcal B$, and $q$ plays the role of the ``period'' . In fact, in the case of a single maximum at a periodic point $\zeta$, then $q$ is actually the period of $\zeta$. This idea appeared first in \cite[Proposition~1]{FFT12} and was further elaborated in \cite[Proposition~2.7]{FFT15}.

However, the applications made so far always assumed that $q$ was a fixed positive integer. This is no longer compatible with the existence of  countable infinite number of maximal points of $\varphi$, which lay on the same orbit of some point. This means we need to adjust the conditions and arguments in order to consider the possibility of arbitrarily large $q$. Hence, we start by considering
%
the sequence $(q_n)_{n\in\N}$ to be such that
\begin{equation}
\label{eq:qn}
\lim_{n\rightarrow\infty}q_n=\infty\;\;\;\;\;\;\mbox{and}\;\;\;\;\;\; \lim_{n\rightarrow\infty}\frac{q_n}{n}=0.
\end{equation}

Let $(u_n)_{n\in\N}$ satisfy condition \eqref{eq:un} and set  $U_n:=U(u_n)$ and $\AA_{q_n,n}:=\AA_{q_n}(u_n)$, for all $n\in\N$. Also, let
\begin{equation}
\label{def:thetan}
\theta_n:=\frac{\mu\left(\AA_{q_n,n}\right)}{\mu(U_n)}.
\end{equation}

Let $B\in\B$ be an event. For some $s\geq0$ and $\ell\geq 0$, we define:
\begin{equation}
\label{eq:W-def}
\mathscr W_{s,\ell}(B)=\bigcap_{i=\lfloor s\rfloor}^{\lfloor s\rfloor+\max\{\lfloor\ell\rfloor-1,\ 0\}} f^{-i}(B^c).
\end{equation}
The notation $f^{-i}$ is used for the preimage by $f^i$. We will write $\mathscr W_{s,\ell}^c(B):=(\mathscr W_{s,\ell}(B))^c$.
Whenever is clear or unimportant which event $B\in\B$ applies, we will drop the $B$ and write just $\mathscr W_{s,\ell}$ or $\mathscr W_{s,\ell}^c$.
Observe that
\begin{equation}
\label{eq:EVL-HTS}
\mathscr W_{0,n}(U(u))=\{M_n\leq u\}.
\end{equation}

Here we adapt the two conditions $\D(u_n)$ and $\D'_q(u_n)$ of \cite{FFT15} for $q_n$ satisfying \eqref{eq:qn}.

\begin{condition}[$\D_{q_n}(u_n)$]\label{cond:D} We say that $\D_{q_n}(u_n)$ holds for the sequence $X_0,X_1,\ldots$ if for every  $\ell,t,n\in\N$
\begin{equation}\label{eq:D1}
\left|\mu\left( \AA_{q_n,n}\cap
 \mathscr W_{t,\ell}\left( \AA_{q_n,n}\right) \right)-\mu\left( \AA_{q_n,n}\right)
  \mu\left(\mathscr W_{0,\ell}\left( \AA_{q_n,n}\right)\right)\right|\leq \gamma(n,t),
\end{equation}
where $\gamma(n,t)$ is decreasing in $t$ for each $n$ and, there exists a sequence $(t_n)_{n\in\N}$ such that $t_n=o(n)$ and
$n\gamma(n,t_n)\to0$ when $n\rightarrow\infty$.
\end{condition}

Consider the sequence $(t_n)_{n\in\N}$, given by condition  $\D_{q_n}(u_n)$ and let $(k_n)_{n\in\N}$ be another sequence of integers such that
\begin{equation}
\label{eq:kn-sequence}
k_n\to\infty\quad \mbox{and}\quad  k_n t_n = o(n).
\end{equation}

\begin{condition}[$\D'_{q_n}(u_n)$]\label{cond:D'q} We say that $\D'_{q_n}(u_n)$
holds for the sequence $X_0,X_1,X_2,\ldots$ if there exists a sequence $(k_n)_{n\in\N}$ satisfying \eqref{eq:kn-sequence} and such that
\begin{equation}
\label{eq:D'rho-un}
\lim_{n\rightarrow\infty}\,n\sum_{j=q_n+1}^{\lfloor n/k_n\rfloor-1}\mu\left(  \AA_{q_n,n}\cap f^{-j}\left( \AA_{q_n,n}\right)
\right)=0.
\end{equation}
\end{condition}

We are now ready to state a result that gives us the existence of EVL under conditions $\D_{q_n}$ and $\D'_{q_n}$.

\begin{theorem}
\label{thm:qn}
Let $X_0, X_1, \ldots$ be a stationary stochastic process and $(u_n)_{n\in\N}$ a sequence satisfying \eqref{eq:un}, for some $\tau>0$. Assume that conditions $\D_{q_n}(u_n)$ and $\D_q'(u_n)$ hold for some sequence $(q_n)_{n\in\N_0}$ satisfying \eqref{eq:qn}, and sequences $(t_n)_{n\in\N}$ and $(k_n)_{n\in\N}$ as in the statement of those conditions.  Moreover assume that the limit $\lim_{n\to\infty} \theta_n=:\theta$ exists. Then
$$
\lim_{n\to\infty}\mu(M_n\leq u_n)=\lim_{n\to\infty}\mu(\mathscr W_{0,n}(U_n))= \lim_{n\to\infty}\mu(\mathscr W_{0,n}(\AA_{q_n,n}))=\e^{-\theta\tau}.
$$
\end{theorem}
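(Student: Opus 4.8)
The plan is to follow the classical block decomposition of \cite{FFT12,FFT15}, modified so as to allow the ``period'' $q_n$ to grow with $n$. The argument splits into two tasks: (i) show that the three quantities in the statement share the same limit, so that it suffices to compute $\lim_n\mu(\mathscr W_{0,n}(\AA_{q_n,n}))$; and (ii) evaluate this limit via blocking, with $\D_{q_n}(u_n)$ providing the approximate independence between blocks and $\D'_{q_n}(u_n)$ killing the short returns. For (i), recall from \eqref{eq:EVL-HTS} that $\mu(M_n\le u_n)=\mu(\mathscr W_{0,n}(U_n))$, so only the last equality needs work. Since $\AA_{q_n,n}\subseteq U_n$, we have $\mathscr W_{0,n}(U_n)\subseteq\mathscr W_{0,n}(\AA_{q_n,n})$; conversely, if $x\in\mathscr W_{0,n}(\AA_{q_n,n})$ and $j^\ast:=\max\{j\le n-1:\ X_j(x)>u_n\}$ is well-defined, then $j^\ast\le n-1-q_n$ would force $f^{j^\ast}(x)\in\AA_{q_n,n}$, a contradiction, so $j^\ast\in\{n-q_n,\dots,n-1\}$ and hence
\[
\mathscr W_{0,n}(\AA_{q_n,n})\setminus\mathscr W_{0,n}(U_n)\subseteq\bigcup_{i=n-q_n}^{n-1}f^{-i}(U_n),
\]
whose measure is at most $q_n\mu(U_n)=\tfrac{q_n}{n}\,n\mu(U_n)\to0$ by \eqref{eq:qn} and \eqref{eq:un}. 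This reduces everything to showing $\mu(\mathscr W_{0,n}(\AA_{q_n,n}))\to\e^{-\theta\tau}$.

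For (ii), write $p_n:=\mu(\AA_{q_n,n})$, so that $np_n=\theta_n\,n\mu(U_n)\to\theta\tau$ by \eqref{def:thetan}, \eqref{eq:un} and $\theta_n\to\theta$; set $\ell_n:=\lfloor n/k_n\rfloor$ and note $k_n=o(n)$ (because $k_n\le k_nt_n=o(n)$) and $n-k_n<k_n\ell_n\le n$. I would establish the one-step recursion $a_{m+1}=(1-\ell_np_n)a_m+E_m$ for $a_m:=\mu(\mathscr W_{0,m\ell_n}(\AA_{q_n,n}))$ (with $|a_1-(1-\ell_np_n)|$ bounded in the same way) by splitting $\mathscr W_{0,(m+1)\ell_n}(\AA_{q_n,n})=\mathscr W_{0,\ell_n}(\AA_{q_n,n})\cap f^{-\ell_n}\big(\mathscr W_{0,m\ell_n}(\AA_{q_n,n})\big)$, expanding $\mathscr W_{0,\ell_n}(\AA_{q_n,n})^c=\bigcup_{i=0}^{\ell_n-1}f^{-i}(\AA_{q_n,n})$, and applying Bonferroni: for $i\le\ell_n-t_n$ the separation $\ell_n-i\ge t_n$ lets me invoke $\D_{q_n}(u_n)$ (with $\gamma$ decreasing in its second argument) to replace $\mu(\AA_{q_n,n}\cap\mathscr W_{\ell_n-i,\,m\ell_n}(\AA_{q_n,n}))$ by $p_na_m$ up to $\gamma(n,t_n)$, while the at most $t_n$ remaining terms are bounded by $p_n$. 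This gives
\[
|E_m|\le C\Big(t_np_n+\ell_n\gamma(n,t_n)+\ell_n\!\!\sum_{j=q_n+1}^{\ell_n-1}\!\!\mu\big(\AA_{q_n,n}\cap f^{-j}(\AA_{q_n,n})\big)\Big),
\]
where I have used the elementary but decisive fact that $\AA_{q_n,n}\cap f^{-j}(\AA_{q_n,n})=\emptyset$ for all $1\le j\le q_n$ — since $\AA_{q_n,n}$ forces $X_j\le u_n$ while $f^{-j}(\AA_{q_n,n})$ forces $X_j>u_n$ — so the return sum (here and in the Bonferroni correction) only runs over $j>q_n$, matching the sum in $\D'_{q_n}(u_n)$. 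Iterating, $a_{k_n}=(1-\ell_np_n)^{k_n}+O\big(k_n\max_m|E_m|\big)$, and $|\mu(\mathscr W_{0,n}(\AA_{q_n,n}))-a_{k_n}|\le(n-k_n\ell_n)p_n\le k_np_n\to0$.

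To close, one checks that each error term vanishes: $k_nt_np_n=\tfrac{k_nt_n}{n}\,np_n\to0$ by \eqref{eq:kn-sequence} and $np_n\to\theta\tau$; $k_n\ell_n\gamma(n,t_n)\le n\gamma(n,t_n)\to0$ by $\D_{q_n}(u_n)$; and
\[
k_n\ell_n\!\!\sum_{j=q_n+1}^{\ell_n-1}\!\!\mu\big(\AA_{q_n,n}\cap f^{-j}(\AA_{q_n,n})\big)\le n\!\!\sum_{j=q_n+1}^{\lfloor n/k_n\rfloor-1}\!\!\mu\big(\AA_{q_n,n}\cap f^{-j}(\AA_{q_n,n})\big)\to0
\]
by $\D'_{q_n}(u_n)$. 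Finally $k_n\ell_np_n=\tfrac{k_n\ell_n}{n}\,np_n\to\theta\tau$ and $\ell_np_n\le\tfrac{np_n}{k_n}\to0$, so $(1-\ell_np_n)^{k_n}\to\e^{-\theta\tau}$; combined with task (i) this yields the stated chain of limits.

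The main obstacle, and what distinguishes this from a transcription of \cite[Proposition~2.7]{FFT15}, is that $q_n\to\infty$: one must control every error uniformly while the ``dead zone'' $\{1,\dots,q_n\}$ on which $\AA_{q_n,n}$ cannot reoccur keeps widening, and in particular check that the $\ell_n$-fold return sum in the per-step error $E_m$ is still dominated by $n$ times the sum appearing in $\D'_{q_n}(u_n)$. This is exactly why the conditions are reformulated with $q_n$-dependent ranges and why the compatibility requirements $q_n=o(n)$, $t_n=o(n)$, $k_n\to\infty$, $k_nt_n=o(n)$ are imposed; the structural input that makes the bookkeeping balance is the emptiness of $\AA_{q_n,n}\cap f^{-j}(\AA_{q_n,n})$ for $j\le q_n$.
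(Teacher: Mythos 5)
Your proof is correct and follows essentially the same route as the paper: the first reduction is the content of \cite[Proposition~2.7]{FFT15} (your bound $q_n\mu(U_n)$ versus the paper's $q_n\mu(U_n\setminus\AA_{q_n,n})$ is an immaterial coarsening), and your block recursion with $\D_{q_n}(u_n)$ controlling the long-range terms, $\D'_{q_n}(u_n)$ the returns beyond the gap $q_n$, and the emptiness of $\AA_{q_n,n}\cap f^{-j}(\AA_{q_n,n})$ for $j\le q_n$ reproduces exactly the error estimate the paper obtains by adapting \cite[Proposition~2.10, Theorem~2.3 and Corollary~2.4]{FFT15} with $q$ replaced by $q_n$. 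The only difference is that you write out in full the blocking argument that the paper imports by citation.
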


\begin{proof}
The first equality follows trivially from \eqref{eq:EVL-HTS}. The second equality follows from applying \cite[Proposition~2.7]{FFT15} and stationarity to obtain that
$$
\left|\mu(\mathscr W_{0,n}(U_n))-\mu(\mathscr W_{0,n}(\AA_{q_n,n}))\right|\leq q_n\mu(U_n\setminus\AA_{q_n,n})
$$
and then observe that the term on the right vanishes as $n\to\infty$ because of the defining properties of $q_n$ and $u_n$. For the third equality we only need to use \cite[Proposition~2.10]{FFT15} and adapt the proofs of \cite[Theorem~2.3 and Corollary~2.4]{FFT15}, by replacing $q$ by $q_n$, satisfying \eqref{eq:qn}, to obtain:
\begin{align}
\big|\mu(\mathscr W_{0,n}(\AA_{q_n,n}))-\e^{-\theta\tau}\big|&\leq  C\Bigg[k_nt_n\frac{\tau}n+n\gamma(n,t_n)+n\sum_{j=q_n+1}^{\lfloor n/k_n \rfloor} \mu\left(  \AA_{q_n,n}\cap f^{-j}\left(  \AA_{q_n,n}\right)\right)\nonumber\\
&\quad+ \e^{-\theta\tau}\left(\left|\tau-n\mu\left( U_n\right)\right|+\frac {\tau^2}{k_n}+\left|\theta_n-\theta\right|\tau\right)\Bigg],
\end{align}
for some $C>0$ and then use the properties of $k_n$, $t_n$, $u_n$, plus the convergence of $\theta_n$ to $\theta$ and the new conditions $\D_{q_n}(u_n)$ and $\D'_{q_n}(u_n)$ in order to show that all terms on the right vanish as $n\to\infty$.
\end{proof}

\section{Applications to systems with countable  maximal sets}

\subsection{Assumptions on the system and examples of application}

We assume that the system admits a first return time induced map with decay of correlations against $L^1$ observables. In order to clarify what is meant by the latter we define:
\begin{definition}[Decay of correlations]
\label{def:dc}
Let \( \mathcal C_{1}, \mathcal C_{2} \) denote Banach spaces of real valued measurable functions defined on \( \X \).
We denote the \emph{correlation} of non-zero functions $\phi\in \mathcal C_{1}$ and  \( \psi\in \mathcal C_{2} \) w.r.t.\ a measure $\p$ as
\[
\cv_\p(\phi,\psi,n):=\frac{1}{\|\phi\|_{\mathcal C_{1}}\|\psi\|_{\mathcal C_{2}}}
\left|\int \phi\, (\psi\circ f^n)\, \dif\p-\int  \phi\, \dif\p\int
\psi\, \dif\p\right|.
\]

We say that we have \emph{decay
of correlations}, w.r.t.\ the measure $\p$, for observables in $\mathcal C_1$ \emph{against}
observables in $\mathcal C_2$ if, for every $\phi\in\mathcal C_1$ and every
$\psi\in\mathcal C_2$ we have
 $$\cv_\p(\phi,\psi,n)\to 0,\quad\text{ as $n\to\infty$.}$$
  \end{definition}

We say that we have \emph{decay of correlations against $L^1$
observables} whenever  this holds for $\mathcal C_2=L^1(\p)$  and
$\|\psi\|_{\mathcal C_{2}}=\|\psi\|_1=\int |\psi|\,\dif\p$.

If a system already has decay of correlations against $L^1$ observables, then by taking the whole set $\X$ as the base for the first return time induced map, which coincides with the original system, then the assumption we impose on the system is trivially satisfied. Examples of systems with such property include:
\begin{itemize}

\item Uniformly expanding maps on the circle/interval (see \cite{BG97});

\item Markov maps (see \cite{BG97});

\item Piecewise expanding maps of the interval with countably many branches like Rychlik maps (see \cite{R83});

\item Higher dimensional piecewise expanding maps studied by Saussol in \cite{S00}.

\end{itemize}

\begin{remark}
\label{rem:C-space}
In the first three examples above the Banach space $\mathcal C_1$ for the decay of correlations can be taken as the space of functions of bounded variation. In the fourth example  the Banach space $\mathcal C_1$ is the space of functions with finite quasi-H\"older norm studied in \cite{S00}. We refer to \cite{BG97,S00} or \cite{AFV15} for precise definitions but mention that if $J\subset \R$ is an interval then $\I_J$ is of bounded variation and its BV-norm is equal to 2, \ie $\|\I_J\|_{BV}=2$ and if $A$ denotes a ball or an annulus then $\I_A$ has a finite quasi-H\"older norm.
\end{remark}

Although the examples above are all in some sense uniformly hyperbolic, we can consider non-uniformly hyperbolic systems, such as intermittent maps, which admit a `nice' first return time induced map over some subset $Y\subset \X$, called the base of the induced map. To be more precise, consider the usual original system as $f:\X\to \X$ with an ergodic $f$-invariant probability measure $\mu$, choose a subset $Y\subset \X$ and consider $F_Y:Y\to Y$ to be the first return map $f^{r_Y}$ to $Y$ (note that $F$ may be undefined at a zero Lebesgue measure set of points which do not return to $Y$, but most of these points are not important, so we will abuse notation here).  Let $\mu_Y(\cdot)=\frac{\mu(\cdot \cap Y)}{\mu(Y)}$ be the conditional measure on $Y$.  By Kac's Theorem $\mu_Y$ is $F_Y$-invariant.

From  \cite{BSTV03,HayWinZwe14}, we know that the Hitting Times Statistics (which can be put in terms of EVL by \cite{FFT10,FFT11}) of the first return induced system coincide with that of the original system. So, as long as the maximal set $\mathcal M$ is contained in the base of the induced system, $Y$, then the induced and original system share the same EVL. Hence, in order to cover all these examples of systems with `nice' first return time induced maps we are reduced to proving the existence of EVL for systems with decay of correlations against $L^1$ observables.
This fact motivates the following:
\vspace{0.5cm}

\noindent \textbf{Assumption A}
\emph{Let $f:\X\to\X$ be a system with summable decay of correlations against $L^1$ observables, \ie for all $\varphi\in\mathcal C_1$ and $\psi\in L^1$, then $\cv(\varphi,\psi,n)\leq \rho_n$, with $\sum_{n\geq\N}\rho_n<\infty$. }
\vspace{0.5cm}

Among the examples of systems with these `nice' induced maps we mention  the \emph{Manneville-Pomeau} (MP) map equipped with an absolutely continuous invariant probability measure (see for example \cite{LSV99, BSTV03}) and Misiurewicz quadratic maps (see \cite{MS93}).  

\subsection{Main results}

\begin{theorem}
\label{thm:continuous}
Assume that $f:\X\to\X$ satisfies Assumption A. Let $X_0, X_1,\ldots$ be given by \eqref{eq:def-stat-stoch-proc-DS}, where $\varphi$ achieves a global maximum on the compact set $\mathcal M=\{\xi_i\}_{i\in\N_0}$, where $\xi_i=f^{m_i}(\zeta)$, for some $\zeta\in\X$, and $\xi_0$ is the only accumulation point of the sequence $\{\xi_i\}_{i\in\N}$. Let $(u_n)_{n\in\N}$ be as sequence of thresholds as in \eqref{eq:un}. Assume further that $\varphi$ is continuous at $\xi_0$ (as when \eqref{eq:dist-case} holds) and there exist $N(n)\in\N$ and $\eps_0(n),\ldots, \eps_{N(n)}(n)$ so that  $\lim_{n\to\infty}N(n)=\infty$, $\lim_{n\to\infty}N(n)/n=0$ and
$$
U_n=U(u_n)=\bigcup_{j=1}^{N(n)} B_{\eps_j(n)}(\xi_j)\cup B_{\eps_0(n)}(\xi_0),
$$
where for each $i\neq j$, we have $B_{\eps_i(n)}(\xi_i)\cap B_{\eps_j(n)}(\xi_j)=\emptyset$, for sufficiently large $n$.
Let $q_n=N(n)$, set $\AA_{q_n,n}:=\AA_{q_n}(u_n)$ as given by \eqref{eq:A-def}. If the conditions:
\begin{enumerate}
\item $\displaystyle \lim_{n\to\infty}\|\I_{\AA_{q_n,n}}\|_{\mathcal C_1}n\rho_{t_n}=0$, for some sequence $(t_n)_{n\in\N}$ such that $t_n=o(n)$
\item $\displaystyle \lim_{n\to\infty}\|\I_{\AA_{q_n,n}}\|_{\mathcal C_1}\sum_{j=N(n)}^\infty \rho_j=0$
\end{enumerate}
hold and the limit $\lim_{n\to\infty}\theta_n:=\theta$ exists, where $\theta_n$ is given by \eqref{def:thetan}, then we have that
$$
\lim_{n\to\infty}\mu(M_n\leq u_n)=\e^{-\theta\tau}.
$$
\end{theorem}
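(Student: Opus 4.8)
The strategy is to verify that the hypotheses of Theorem 3.4 are satisfied with the choices $q_n = N(n)$ and the indicated sequences $(t_n)_{n\in\N}$ and $(k_n)_{n\in\N}$, and then simply invoke that theorem. Since $N(n)\to\infty$ and $N(n)/n\to 0$ by hypothesis, the sequence $q_n=N(n)$ satisfies \eqref{eq:qn}. The limit $\lim_{n\to\infty}\theta_n=\theta$ is assumed, so the only real work is to derive conditions $\D_{q_n}(u_n)$ and $\D'_{q_n}(u_n)$ from Assumption A and hypotheses (1)--(2). The bridge between these is the decay of correlations estimate applied with $\phi = \I_{\AA_{q_n,n}}$ in the Banach space $\mathcal C_1$ and $\psi$ an appropriate $L^1$ observable; this is exactly where the norm factors $\|\I_{\AA_{q_n,n}}\|_{\mathcal C_1}$ enter. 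Note that $\AA_{q_n,n}$ as defined in \eqref{eq:A-def} is a union of at most $N(n)+1$ annuli/balls (the connected components of $U_n$ minus their preimages under the first $q_n$ iterates), so $\I_{\AA_{q_n,n}}$ does lie in $\mathcal C_1$ with a controllable (though possibly growing) norm, as discussed in Remark 4.2.

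First I would establish $\D_{q_n}(u_n)$. Write $B=\AA_{q_n,n}$ for brevity. We want to bound
$$
\left|\mu\left(B\cap \mathscr W_{t,\ell}(B)\right)-\mu(B)\,\mu\left(\mathscr W_{0,\ell}(B)\right)\right|.
$$
Apply decay of correlations with $\phi=\I_B\in\mathcal C_1$ and $\psi=\I_{\mathscr W_{0,\ell}(B)}\in L^\infty\subset L^1$, noting $\|\psi\|_1\leq 1$ and that $\mathscr W_{t,\ell}(B)=f^{-t}(\mathscr W_{0,\ell}(B))$ by stationarity of the construction. This yields $\gamma(n,t)\leq \|\I_{B}\|_{\mathcal C_1}\,\rho_t$ (using $\cv(\phi,\psi,n)\le\rho_n$ from Assumption A). Since $\rho_t$ is summable it is in particular decreasing along a subsequence, but to get the monotonicity in $t$ required by the condition I would replace $\rho_t$ by its tail $\sum_{j\geq t}\rho_j$, which is decreasing and still summable; set $\gamma(n,t):=\|\I_{\AA_{q_n,n}}\|_{\mathcal C_1}\sum_{j\geq t}\rho_j$. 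Then hypothesis (1), together with the summability in Assumption A, gives a sequence $t_n=o(n)$ with $n\gamma(n,t_n)\to 0$ (one may need to pass to a slightly larger $t_n$ so that the tail, rather than $\rho_{t_n}$ itself, is controlled; since $\sum\rho_n<\infty$ the tails go to zero, and a standard diagonal choice works). This verifies $\D_{q_n}(u_n)$.

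Next I would establish $\D'_{q_n}(u_n)$. Here one must show
$$
\lim_{n\to\infty} n\sum_{j=q_n+1}^{\lfloor n/k_n\rfloor-1}\mu\left(\AA_{q_n,n}\cap f^{-j}(\AA_{q_n,n})\right)=0
$$
for a suitable $k_n$ satisfying \eqref{eq:kn-sequence}. For each $j$ in this range, decay of correlations gives
$$
\mu\left(\AA_{q_n,n}\cap f^{-j}(\AA_{q_n,n})\right)\leq \mu(\AA_{q_n,n})^2 + \|\I_{\AA_{q_n,n}}\|_{\mathcal C_1}\,\mu(\AA_{q_n,n})\,\rho_j,
$$
where I used $\|\I_{\AA_{q_n,n}}\|_1=\mu(\AA_{q_n,n})$. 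Summing over $j$ from $q_n+1$ to $\lfloor n/k_n\rfloor$ and multiplying by $n$: the first term contributes at most $n\cdot\frac{n}{k_n}\cdot\mu(\AA_{q_n,n})^2 \leq \frac{1}{k_n}(n\mu(U_n))^2$, which is $O(\tau^2/k_n)\to 0$ since $n\mu(U_n)\to\tau$ and $\theta_n$ is bounded; the second term contributes at most $n\,\mu(\AA_{q_n,n})\,\|\I_{\AA_{q_n,n}}\|_{\mathcal C_1}\sum_{j\geq q_n+1}\rho_j \leq C\tau\,\|\I_{\AA_{q_n,n}}\|_{\mathcal C_1}\sum_{j\geq N(n)}\rho_j$, which vanishes by hypothesis (2). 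Choosing $k_n\to\infty$ slowly enough that $k_n t_n=o(n)$ (possible since $t_n=o(n)$) completes the verification of \eqref{eq:kn-sequence} and $\D'_{q_n}(u_n)$.

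With both conditions in hand and $\theta_n\to\theta$, Theorem 3.4 applies and gives $\lim_{n\to\infty}\mu(M_n\leq u_n)=\e^{-\theta\tau}$, which is the claim. \textbf{The main obstacle} I anticipate is not any single estimate but the bookkeeping around the growing norm $\|\I_{\AA_{q_n,n}}\|_{\mathcal C_1}$: because $\AA_{q_n,n}$ has up to $N(n)\to\infty$ connected components, this norm may blow up, and one must check that hypotheses (1)--(2) are genuinely strong enough to beat it — in particular that $t_n$ can be chosen $o(n)$ while still forcing $n\|\I_{\AA_{q_n,n}}\|_{\mathcal C_1}\rho_{t_n}\to 0$, and that the interplay of the three competing sequences $q_n=N(n)$, $t_n$, $k_n$ is consistent. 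A secondary subtlety is justifying that $\I_{\AA_{q_n,n}}\in\mathcal C_1$ with the stated norm behaviour, i.e.\ that subtracting the $q_n$ preimages of $U_n$ from $U_n$ produces a set whose indicator still has finite (quasi-H\"older or BV) norm; this is where Remark 4.2 and the geometry of the maps in Assumption A are used, and it is the point at which the ``truncated function approximations'' mentioned in the introduction would be invoked if the sets were less tractable.
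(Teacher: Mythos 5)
Your proposal is correct and follows essentially the same route as the paper: verify $\D_{q_n}(u_n)$ by applying the decay of correlations of Assumption A with $\phi=\I_{\AA_{q_n,n}}$ and $\psi=\I_{\mathscr W_{0,\ell}(\AA_{q_n,n})}$ (giving $\gamma(n,t)\le \|\I_{\AA_{q_n,n}}\|_{\mathcal C_1}\rho_t$, handled by hypothesis (1)), verify $\D'_{q_n}(u_n)$ with $\phi=\psi=\I_{\AA_{q_n,n}}$ and the resulting bound $\tau^2/k_n+\tau\|\I_{\AA_{q_n,n}}\|_{\mathcal C_1}\sum_{j\ge N(n)}\rho_j$ (handled by hypothesis (2)), and then invoke Theorem~\ref{thm:qn}. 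Your extra care about monotonicity of $\gamma(n,t)$ in $t$ (passing to the tail sum) is a minor refinement the paper leaves implicit, not a different argument.
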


\begin{proof}
By Theorem~\ref{thm:qn}, we need to check that $X_0, X_1, \ldots$ satisfies conditions $\D_{q_n}$ and $\D'_{q_n}$.

\noindent\textbf{Verification of condition} $\D_{q_n}(u_n)$.

Taking  $\phi=\I_{ \AA_{{q_n,n}}}$ and  $\psi=\I_{ \mathscr{W}_{t,\ell}(\AA_{q_n,n})}$  in Definition \ref{def:dc}, there exists
$C>0$  , so that for any positive numbers $\ell$ and $t$ we have

\begin{align*}
  |\mu(\AA_{q_n,n}\cap \mathscr{W}_{t,\ell}(\AA_{q_n,n}))
                &-\mu(\AA_{q_n,n})\mu(\mathscr{W}_{0,\ell}(\AA_{q_n,n}))| \\
                &= \left|\int_{\mathcal{X}}\I_{ \AA_{q_n,n}}\cdot (\I_{ \mathscr{W}_{0,\ell}(\AA_{q_n,n})}\circ f^t)d\mu-\int_{\mathcal{X}}\I_{ \AA_{q_n,n}}d\mu \int_{\mathcal{X}}\I_{ \mathscr{W}_{0,\ell}(\AA_{q_n,n})}d\mu \right| \\
                &\leq C\|\I_{\AA_{q_n,n}}\|_{\mathcal C_1}\rho(t).
\end{align*}
Then, Condition $\D_{q_n}(u_n)$ follows if there exists  a sequence $(t_n)_{n\in\NN}$ such that $t_n=o(n)$ and
$\displaystyle\lim_{n\to\infty} \|\I_{\AA_{q_n,n}}\|_{\mathcal C_1}n\rho_{t_n}=0$, which is precisely the content of hypothesis (1).

\noindent\textbf{Verification of condition}  $\D_{q_n,}'(u_n)$
Taking $\phi=\psi=\I_{\AA_{q_n,n}}$  in Definition \ref{def:dc} we obtain
\begin{align}
\label{eq:estimate1}
\mu\left( \AA_{q_n,n}\cap f^{-j}( \AA_{q_n,n})\right)
            &=\int_Y \phi\cdot (\phi\circ f^{j})d\mu
            \leq \left(\mu( \AA_{q_n,n})\right)^2+ \left\| \I_{ \AA_{q_n,n}}\right\|_{\mathcal C_1} \mu\left( \AA_{q_n,n}\right) \rho_j.
\end{align}
Let $t_n$ be as above and take $(k_n)_{n\in\N}$ as in \eqref{eq:kn-sequence}. Recalling that  $\lim_{n\to\infty}n\mu(U_n)=\tau$ it follows that
\begin{align*}
n\sum_{j=q_n+1}^{\lfloor n/k_n \rfloor}\mu\left( \AA_{q_n,n}\cap f^{-j}( \AA_{q_n,n})\right)
                & = n\sum_{j=N(n)+1}^{\lfloor n/k_n \rfloor} \mu\left( \AA_{q_n,n}\cap f^{-j}( \AA_{q_n,n})\right)\\
                & \le n\big\lfloor\tfrac {n}{k_n}\big\rfloor\mu\left( \AA_{q_n,n}\right)^2 +n\left\| \I_{ \AA_{q_n,n}}\right\|_{\mathcal C_1} \mu\left( \AA_{q_n,n}\right) \sum_{j=N(n)+1}^{\lfloor n/k_n \rfloor}\rho_j\\
                &\le \frac{\left(n\mu( \AA_{q_n,n})\right)^2}{k_n} +n\left\| \I_{ \AA_{q_n,n}}\right\|_{\mathcal C_1} \mu\left( \AA_{q_n,n}\right) \sum_{j=N(n)}^{\infty}\rho_j\\
                &\leq \frac{\tau^2}{k_n}+\tau\left\| \I_{\AA_{q_n,n}}\right\|_{\mathcal C_1} \sum_{j=N(n)}^{\infty}\rho_j \xrightarrow[n\to\infty]{}0,
\end{align*}
by choice of $k_n$ and hypothesis (2).
\end{proof}

We observe that in the previous theorem since for each $n$ the number of connected components of both $U_n$ and $\AA_{q_n,n}$ is finite then the function $\I_{\AA_{q_n,n}}$ in principle belongs to the Banach space $\mathcal C_1$, when $\mathcal C_1$ is the space of functions of bounded variation or with finite quasi-H\"older norm considered in \cite{S00}. However, if the observable $\varphi$ is discontinuous at $0$ then both $U_n$ and $\AA_{q_n,n}$ may have an infinite number of connected components, which means that $\I_{\AA_{q_n,n}}$ does not belong to any of the Banach spaces mentioned. In such cases we must be more careful and introduce some suitable truncated versions of $U_n$ and $\AA_{q_n,n}$ as we will see in the following theorem.

\begin{theorem}
\label{thm:discontinuous}
Assume that $f:\X\to\X$ satisfies Assumption A. Let $X_0, X_1,\ldots$ be given by \eqref{eq:def-stat-stoch-proc-DS}, where $\varphi$ achieves a global maximum on the compact set $\mathcal M=\{\xi_i\}_{i\in\N_0}$, where $\xi_i=f^{m_i}(\zeta)$, for some $\zeta\in\X$, and $\xi_0$ is the only accumulation point of the sequence $\{\xi_i\}_{i\in\N}$. Let $(u_n)_{n\in\N}$ be as sequence of thresholds as in \eqref{eq:un}. Assume further that $\varphi$ is discontinuous at $\xi_0$ and there exist $\eps_1(n), \eps_2(n),\ldots,  $ such that
$$
U_n=U(u_n)=\bigcup_{j=1}^{\infty} B_{\eps_j(n)}(\xi_j)\cup \{\xi_0\}.
$$
Moreover, assume that and there exists $N(n)\in\N$ such that  $\lim_{n\to\infty}N(n)=\infty$, $\lim_{n\to\infty}\frac{N(n)}{n}=0$ and
$$
\lim_{n\to\infty}\frac{\mu(U_n\setminus \tilde U_n)}{\mu(U_n)}=0, \quad \text{where}\quad\tilde U_n=\bigcup_{j=1}^{N(n)} B_{\eps_j(n)}(\xi_j).
$$
Let $q_n=N(n)$, set $\tilde \AA_{q_n,n}:=\tilde U_n\cap\bigcap_{i=1}^{q_n}f^{-i}(\tilde U_n^c)$. If the conditions:
\begin{enumerate}
\item $\displaystyle \lim_{n\to\infty}\|\I_{\tilde \AA_{q_n,n}}\|_{\mathcal C_1}n\rho_{t_n}=0$, for some sequence $(t_n)_{n\in\N}$ such that $t_n=o(n)$
\item $\displaystyle \lim_{n\to\infty}\|\I_{\tilde \AA_{q_n,n}}\|_{\mathcal C_1}\sum_{j=N(n)}^\infty \rho_j=0$
\end{enumerate}
hold and the limit $\lim_{n\to\infty}\theta_n:=\theta$ exists, where $\theta_n=\frac{\mu(\tilde \AA_{q_n,n})}{\mu(\tilde U_n)}$, then we have that
$$
\lim_{n\to\infty}\mu(M_n\leq u_n)=\e^{-\theta\tau}.
$$
\end{theorem}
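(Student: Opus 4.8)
\emph{Proof proposal.} The plan is a truncation argument that reduces everything to Theorem~\ref{thm:qn} applied to the auxiliary sets $\tilde U_n$ and $\tilde\AA_{q_n,n}$ instead of $U_n$ and $\AA_{q_n,n}$; the gain is that $\tilde U_n$ is a finite union of $N(n)$ disjoint balls, so that (for maps covered by Assumption~A) $\tilde\AA_{q_n,n}$ has only finitely many connected components and $\I_{\tilde\AA_{q_n,n}}\in\mathcal C_1$ with finite norm — which is what hypotheses~(1)--(2) presuppose — whereas $\I_{U_n}$ and $\I_{\AA_{q_n,n}}$ need not be. First I would check that the truncation is harmless at the level of the hitting event. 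Since $\tilde U_n\subset U_n$ one has $\mathscr W_{0,n}(U_n)\subseteq\mathscr W_{0,n}(\tilde U_n)$, and a point of the difference must meet $U_n\setminus\tilde U_n$ among its first $n$ iterates, so by $f$-invariance
\begin{equation*}
0\le\mu\big(\mathscr W_{0,n}(\tilde U_n)\big)-\mu\big(\mathscr W_{0,n}(U_n)\big)\le\sum_{i=0}^{n-1}\mu\big(f^{-i}(U_n\setminus\tilde U_n)\big)=n\,\mu(U_n\setminus\tilde U_n).
\end{equation*}
Writing $n\mu(U_n\setminus\tilde U_n)=\big(n\mu(U_n)\big)\tfrac{\mu(U_n\setminus\tilde U_n)}{\mu(U_n)}$ and invoking \eqref{eq:un} together with the truncation hypothesis shows this tends to $0$; the same computation gives $n\mu(\tilde U_n)\to\tau$. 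Since $\{M_n\le u_n\}=\mathscr W_{0,n}(U_n)$ by \eqref{eq:EVL-HTS}, it remains to prove $\mu\big(\mathscr W_{0,n}(\tilde U_n)\big)\to\e^{-\theta\tau}$.

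For this I would rerun the proof of Theorem~\ref{thm:qn} verbatim with $\tilde U_n$ in place of $U_n$ and $\tilde\AA_{q_n,n}=\tilde U_n\cap\bigcap_{i=1}^{q_n}f^{-i}(\tilde U_n^c)$ in place of $\AA_{q_n,n}$: \cite[Proposition~2.7]{FFT15} gives $\big|\mu(\mathscr W_{0,n}(\tilde U_n))-\mu(\mathscr W_{0,n}(\tilde\AA_{q_n,n}))\big|\le q_n\mu(\tilde U_n)=\tfrac{N(n)}{n}\,n\mu(\tilde U_n)\to0$ (here $q_n=N(n)=o(n)$ is used), while \cite[Proposition~2.10]{FFT15} together with the proofs of \cite[Theorem~2.3 and Corollary~2.4]{FFT15}, with $q$ replaced by $q_n$ satisfying \eqref{eq:qn}, gives $\mu(\mathscr W_{0,n}(\tilde\AA_{q_n,n}))\to\e^{-\theta\tau}$ once the error terms are shown to vanish using the properties of $k_n$ and $t_n$, the convergences $n\mu(\tilde U_n)\to\tau$ and $\theta_n\to\theta$, and conditions $\D_{q_n}(u_n)$, $\D'_{q_n}(u_n)$. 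The point is that none of those statements uses that $\tilde U_n$ is the superlevel set of an observable — only the set identity defining $\tilde\AA_{q_n,n}$, stationarity, and the listed asymptotics — so they apply to the auxiliary sets unchanged.

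It then remains to verify conditions $\D_{q_n}(u_n)$ and $\D'_{q_n}(u_n)$ for $\tilde\AA_{q_n,n}$, which is done exactly as in the proof of Theorem~\ref{thm:continuous} via Assumption~A: taking $\phi=\I_{\tilde\AA_{q_n,n}}$, $\psi=\I_{\mathscr W_{0,\ell}(\tilde\AA_{q_n,n})}$ in Definition~\ref{def:dc} yields $\gamma(n,t)\le C\|\I_{\tilde\AA_{q_n,n}}\|_{\mathcal C_1}\rho_t$, so $\D_{q_n}(u_n)$ follows from hypothesis~(1); and taking $\phi=\psi=\I_{\tilde\AA_{q_n,n}}$ yields $\mu\big(\tilde\AA_{q_n,n}\cap f^{-j}(\tilde\AA_{q_n,n})\big)\le\mu(\tilde\AA_{q_n,n})^2+\|\I_{\tilde\AA_{q_n,n}}\|_{\mathcal C_1}\mu(\tilde\AA_{q_n,n})\rho_j$, whence summing over $q_n+1\le j\le\lfloor n/k_n\rfloor$ — the diagonal part bounded by $\tau^2/k_n$ thanks to $n\mu(\tilde U_n)\to\tau$, the off-diagonal part controlled by hypothesis~(2) — gives $\D'_{q_n}(u_n)$. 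I expect the real difficulty to be not any one of these computations but the conceptual content of the middle step: one must make sure the clustering machinery of \cite{FFT15}, conceived for exceedance sets with a recurrence of \emph{fixed} range, genuinely survives for the auxiliary sets $\tilde U_n$ whose ``period'' $q_n=N(n)$ grows — i.e.\ that $q_n$ enters those proofs purely as a bookkeeping length for short returns, so that $q_n\to\infty$, $q_n/n\to0$ and $q_n\mu(\tilde U_n)\to0$ are all that is needed — while simultaneously keeping the two truncation losses $n\mu(U_n\setminus\tilde U_n)$ and $q_n\mu(\tilde U_n)$ both tending to $0$.
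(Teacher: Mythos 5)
Your proposal is correct and follows essentially the same route as the paper: verify $\D_{q_n}(u_n)$ and $\D'_{q_n}(u_n)$ for the truncated sets $\tilde\AA_{q_n,n}$ via Assumption A and hypotheses (1)--(2) exactly as in Theorem~\ref{thm:continuous}, apply the second and third equalities of Theorem~\ref{thm:qn} to the auxiliary sets to get $\mu(\mathscr W_{0,n}(\tilde U_n))\to\e^{-\theta\tau}$, and bridge to $\{M_n\le u_n\}=\mathscr W_{0,n}(U_n)$ by the stationarity bound $\mu(\mathscr W_{0,n}(\tilde U_n)\setminus\mathscr W_{0,n}(U_n))\le n\mu(U_n\setminus\tilde U_n)\to0$. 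Your extra observations (that $n\mu(\tilde U_n)\to\tau$ and that the machinery of \cite{FFT15} only uses set identities and stationarity, not that $\tilde U_n$ is a superlevel set) are exactly the implicit points the paper relies on.
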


\begin{proof}
Using the exact same argument as in the proof of the previous theorem one can check that conditions $\D_{q_n}$ and $\D'_{q_n}$ hold, here, when the role of $\AA_{q_n,n}$ is replaced by that of $\tilde\AA_{q_n,n}$. Then an application of the second and third equalities of the conclusion of Theorem~\ref{thm:qn} allows us to obtain that $\lim_{n\to\infty} \mu(\mathscr W_{0,n}(\tilde U_n))=\e^{-\theta\tau}$. The missing step is to show that $$\lim_{n\to\infty}\mu(M_n\leq u_n)=\lim_{n\to\infty} \mu(\mathscr W_{0,n}(\tilde U_n)).$$
To see this we observe first that, by \eqref{eq:EVL-HTS}, we can replace $\{M_n\leq u_n\}=\mathscr W_{0,n}(U_n)$ and then, by stationarity,
\begin{align*}
\mu(\mathscr W_{0,n}(\tilde U_n)\setminus \mathscr W_{0,n}(U_n))&\leq \sum_{i=0}^{n-1}\mu(f^{-i}(U_n\setminus \tilde U_n))=n\mu(U_n\setminus \tilde U_n).
\end{align*}
Recalling that $\lim_{n\to\infty}n\mu(U_n)=\tau$ and by hypothesis $\lim_{n\to\infty}\frac{\mu(U_n\setminus \tilde U_n)}{\mu(U_n)}=0$, then we realise that the last term on the right vanishes as $n\to\infty$ and the result follows.
\end{proof}

\begin{example}\label{ex1}
  Let $(\mathbb{S}^1,f,\l)$ be the system where  $f(x)=3x \mod1$ and $\l$ is the Lebesgue measure. Let $(\xi_j)_{j\in\NN}$
be the sequence defined as $\xi_j=f^{3^j}(z)$, where $z=\sum_{i=1}^{\infty}\left(\frac{1}{3}\right)^{3^i}$.
Letting $\mathcal{M}=\{0,z,\xi_1,\xi_2,\dots\}$, we notice that $\mathcal{M}$ is closed and $0$ is the unique accumulation point
of $\mathcal{M}$ with $\lim_{j\to\infty}\xi_j=0$.

 Notice that $z>\xi_1>\xi_2>\dots$. We set  $I_0:=\left[\frac{z+\xi_1}{2},z\right]$, $I_1:=\left[\frac{\xi_2+\xi_1}{2},\frac{\xi_1+z}{2}\right]$ and $I_j:=\left[\frac{\xi_{j+1}+\xi_j}{2},\frac{\xi_{j-1}+\xi_j}{2}\right]$, j=$2,3,\ldots$ 
 
 Consider the observable $\varphi:\mathbb S^1\to\mathbb{R}$ given by
$$
\varphi|_{[\frac{z+\xi_{1}}{2},z]}(x)=\frac{2x}{z-\xi_1}-\frac{z+\xi_1}{z-\xi_1},\;\;
\varphi|_{[z,1)}(x)=0,\; \varphi(1)=1,
$$
and
$$
\displaystyle \varphi|_{I_j}(x)=\left\{
                  \begin{array}{ll}
                    \frac{2x}{\xi_j-\xi_{j+1}}-\frac{\xi_{j+1}+\xi_j}{\xi_j-\xi_{j+1}}, & \hbox{ for }x\in [\frac{\xi_{j+1}+\xi_j}{2},\xi_j] \\
                     -\frac{2x}{\xi_{j-1}-\xi_j}+\frac{\xi_{j-1}+\xi_j}{\xi_{j-1}-\xi_j}, & \hbox{ for }x\in [\xi_j,\frac{\xi_{j-1}+\xi_j}{2}]
                  \end{array}
                \right.,
 $$
for $j\in \mathbb N$ if we set $\xi_0:=z.$
\begin{figure}
\includegraphics[height=2cm,width=15cm]{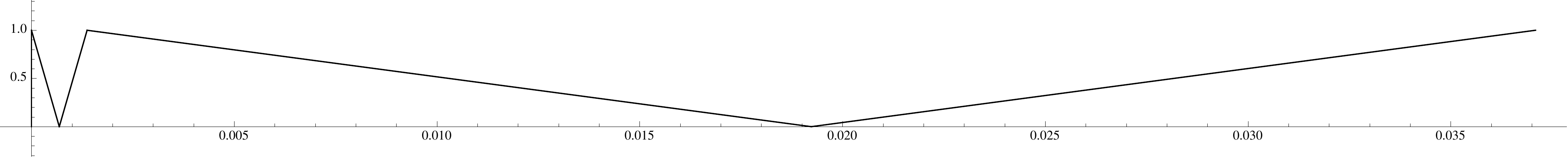}
\caption{The picture is a graph of the observable of Example~\ref{ex1} in the interval $[0,z]$.}
\end{figure}

 The maximum of $\varphi$ is $1$ and occurs on $\mathcal{M}$.
Given $(u_n)_{n\in\NN}$ a sequence of tresholds as in \eqref{eq:un}, 
$$
\varphi|_{I_0}(x)>u_n\Leftrightarrow x\in \left(\frac{\xi_{1}+z}{2}+\frac{z-\xi_1}{2}u_n,z\right): =I_{0,n},
$$

$$
\varphi|_{I_1}(x)>u_n\Leftrightarrow x\in \left(\frac{\xi_{2}+\xi_1}{2}+\frac{\xi_{1}-\xi_2}{2}u_n,\frac{\xi_{1}+z}{2}-\frac{z-\xi_1}{2}u_n\right): =I_{1,n},
$$
and, for $j\geq 2$,
$$
\varphi|_{I_j}(x)>u_n\Leftrightarrow x\in\left(\frac{\xi_{j+1}+\xi_j}{2}+\frac{\xi_{j}-\xi_{j+1}}{2}u_n,\frac{\xi_{j}+\xi_{j-1}}{2}-\frac{\xi_{j-1}-\xi_j}{2}u_n\right): =I_{j,n}.
$$

 As in the hypotheses of Theorem \ref{thm:discontinuous}\footnote{We note that the sets $I_j, I_{j,n}$ are not symmetric with respect to the centre $\xi_j$ but by changing the metric we can still identify the sets $I_j, I_{j,n}$ as balls around $\xi_j$.},
$U_n=\bigcup_{j=0}^{\infty}I_{j,n}\cup \{0\}$, with $|I_{j,n}|=(1-u_n)|I_j|$. 
For each $n\in \NN$, set $N(n)=\lceil\log_3(\log_3n)+1\rceil$. Let $\tilde{U}_n=\bigcup_{j\leq N(n)}I_{j,n}$
and  notice that, as 

\begin{equation}
\label{est_xi_j}
\left(\frac{1}{3}\right)^{2\cdot3^j}\leq \xi_j\leq \left(\frac{1}{3}\right)^{2\cdot3^j}
+\frac{9}{8}\left(\frac{1}{3}\right)^{8\cdot3^j},
\end{equation}
then

\begin{equation}
\label{est_med_xi_j}
\frac{1}{2}\left(\frac{1}{3}\right)^{2\cdot3^{j-1}}\leq \frac{\xi_j+\xi_{j-1}}{2}\leq \left(\frac{1}{3}\right)^{2\cdot3^{j-1}}  
\end{equation}

So, based on the last inequality, we obtain

\begin{align*}
\bigcup_{j>N(n)}I_{j,n}\subset \left[0,\frac{1}{n^2}\right)
  \Rightarrow \mu\left(\bigcup_{j>N(n)}I_{j,n}\right)\leq\frac{1}{n^2}
                                                 \Rightarrow \frac{\mu\left(U_n\backslash \tilde U_n\right)}{\mu(U_n)}=\frac{\mu\left(\bigcup_{j>N(n)}I_{j,n}\right)}{\mu(U_n)}
                                                 \lesssim\frac{1}{n },
\end{align*}
since $\mu(U_n) \sim \tau\slash n$.
Letting  $q_n=N(n)$ and $\tilde \AA_{q_n,n}$ as in Theorem \ref{thm:discontinuous}, we have that
$$
\left\| \I_{\tilde\AA_{q_n,n}}\right\|_{BV}\leq 4 N(n)+1.
$$

By \cite{BG97}, the system $(\mathbb{S}^1,f,\l)$ has exponential decay of correlations against $L^\infty$ observables
with $\mathcal C_1=BV$, \ie there exist $C>0$ and $r\in (0,1)$
so that for $\phi\in BV$ and $\psi\in L^\infty$
$$
\cv_\l(\phi,\psi,n)\leq C r^n.
$$
For $t_n=\sqrt{n}$ we get that
\begin{align*}
  \lim_{n\to\infty}\left\| \I_{\tilde\AA_{q_n,n}}\right\|_{BV} n \ r^{t_n} & \leq \lim_{n\to\infty} \frac{(4 N(n)+1)}{n}\frac{n^2}{ \e^{\sqrt{n}\log \frac{1}{r}}}=0,
\end{align*}
and for some $C'>0$,
\begin{align*}
 \lim_{n\to\infty}\left\| \I_{\tilde\AA_{q_n,n}}\right\|_{BV} \sum_{j=N(n)}^{\infty}r^j
                \leq C' \lim_{n\to\infty}(4 N(n)+1) r^{N(n)}=0.
\end{align*}
Now we observe that
\begin{align*}
\mu(\tilde U_n)=(1-u_n)\left(z-\frac{\xi_{N(n)}+\xi_{N(n)+1}}{2}\right)
\end{align*}
and, letting $J(n)=\max\{j:3^j\leq N(n)\}$, 

\begin{align*}
\mu\left(\tilde\AA_{q_n,n}\right) & = (1-u_n)\left(    \frac{z-\xi_1}{2}-3^{-3}\frac{\xi_1-\xi_2}{2}           +\sum_{j=1}^{J(n)}\left(|I_j|-3^{-3^{(j+1)}+3^j}|I_{j+1}|\right)
+
                                   \sum_{j=J(n)+1}^{N(n)}|I_j|\right)      \\
                      & =  (1-u_n)            \left(    \frac{z-\xi_1}{2}         +\sum_{j=1}^{J(n)}|I_j|
                      -
                                   3^{-3}\frac{\xi_1-\xi_2}{2} -
                      \sum_{j=1}^{J(n)}\left(3^{-3^{(j+1)}+3^j}|I_{j+1}|\right)
+
 \sum_{j=J(n)+1}^{N(n)}|I_j|\right)    \\
                   & =  (1-u_n)            \left(  \sum_{j=0}^{J(n)}|I_j|
                      -
                                   3^{-3}\frac{\xi_1-\xi_2}{2} -
                      \sum_{j=1}^{J(n)}\left(3^{-3^{(j+1)}+3^j}|I_{j+1}|\right)
+
 \sum_{j=J(n)+1}^{N(n)}|I_j|\right).                                                                                 
\end{align*}
\\

Note that, since  $\lim_{j\to\infty}\xi_j=0$, we have that
\begin{align*}
\displaystyle\lim_{n\to\infty}\frac{(1-u_n)\sum_{j=J(n)+1}^{N(n)}|I_j|}{\mu(\tilde U_n)}
                =\lim_{n\to\infty}\frac{\frac{\xi_{J(n)}+\xi_{J(n)+1}-(\xi_{N(n)}+\xi_{N(n)+1})}{2}}{z-\frac{\xi_{N(n)}+\xi_{N(n)+1}}{2}}=0.
\end{align*}
\\

Moreover
\[
\displaystyle\lim_{n\to\infty}\frac{(1-u_n)\sum_{j=0}^{J(n)}|I_j|}{\mu(\tilde U_n)}
                =\lim_{n\to\infty}\frac{ z-\frac{\xi_{J(n)}+\xi_{J(n)+1}}{2}}{z-\frac{\xi_{N(n)}+\xi_{N(n)+1}}{2}}=1.
\]
\\

Consequently, by Theorem by \ref{thm:discontinuous}, the extremal index is given by
\begin{align*}
 \theta=& \lim_{n\to\infty}\theta_n = \lim_{n\to\infty}\frac{\mu\left(\tilde\AA_{q_n,n}\right)}{\mu(\tilde U_n)}
                            =1-  \lim_{n\to\infty}\frac{(1-u_n)\left( 3^{-3}\frac{\xi_1-\xi_2}{2} +
                      \sum_{j=1}^{J(n)}\left(3^{-3^{(j+1)}+3^j}|I_{j+1}|\right)\right)}{(1-u_n)\left(z-\frac{\xi_{N(n)}+\xi_{N(n)+1}}{2}\right)}
\\
                      & =  1-\frac{1}{z}\left( 3^{-3}\cdot\frac{\xi_1-\xi_2}{2}+\lim_{n\to\infty}     \sum_{j=1}^{J(n)}\left(3^{-3^{(j+1)}+3^j}\cdot\frac{\xi_{j}-\xi_{j+2}}{2}\right)  \right)
\end{align*}
and so, $\lim_{n\to\infty} \mu(M_n\leq u_n)=\e^{-\theta \tau}$. A numerical approximation for $\theta$ to the 12th digit gives:
$$
\theta\approx 0.999289701946552
$$
\end{example}

The value of the EI is very close to 1 because it takes a very long time for the maximal set to recur to itself and the major contributions for reducing the EI come from the points that recur faster.

\begin{example}
\label{ex2}
In this example we consider the same system and maximal set, but we change the potential.
As a consequence of that change we get that the set $U_n$ is given by a finite union of
open intervals and, in this case, it is not necessary to build approximations for the set $U_n$.

Let $(\mathbb{S}^1,f,\l)$ and $\mathcal{M}$ be the system and the set
 defined in Example \ref{ex1}. Define  $\varphi:\mathbb{S}^1\to \mathbb R\cup\{\infty\}$
 by $\varphi(x)=-\log d(x,\mathcal{M})$, which attains its maximum at $\mathcal M$ and is continuous
 in $0$, the only accumulation point of $\mathcal M$.

\begin{figure}
\includegraphics[width=7cm]{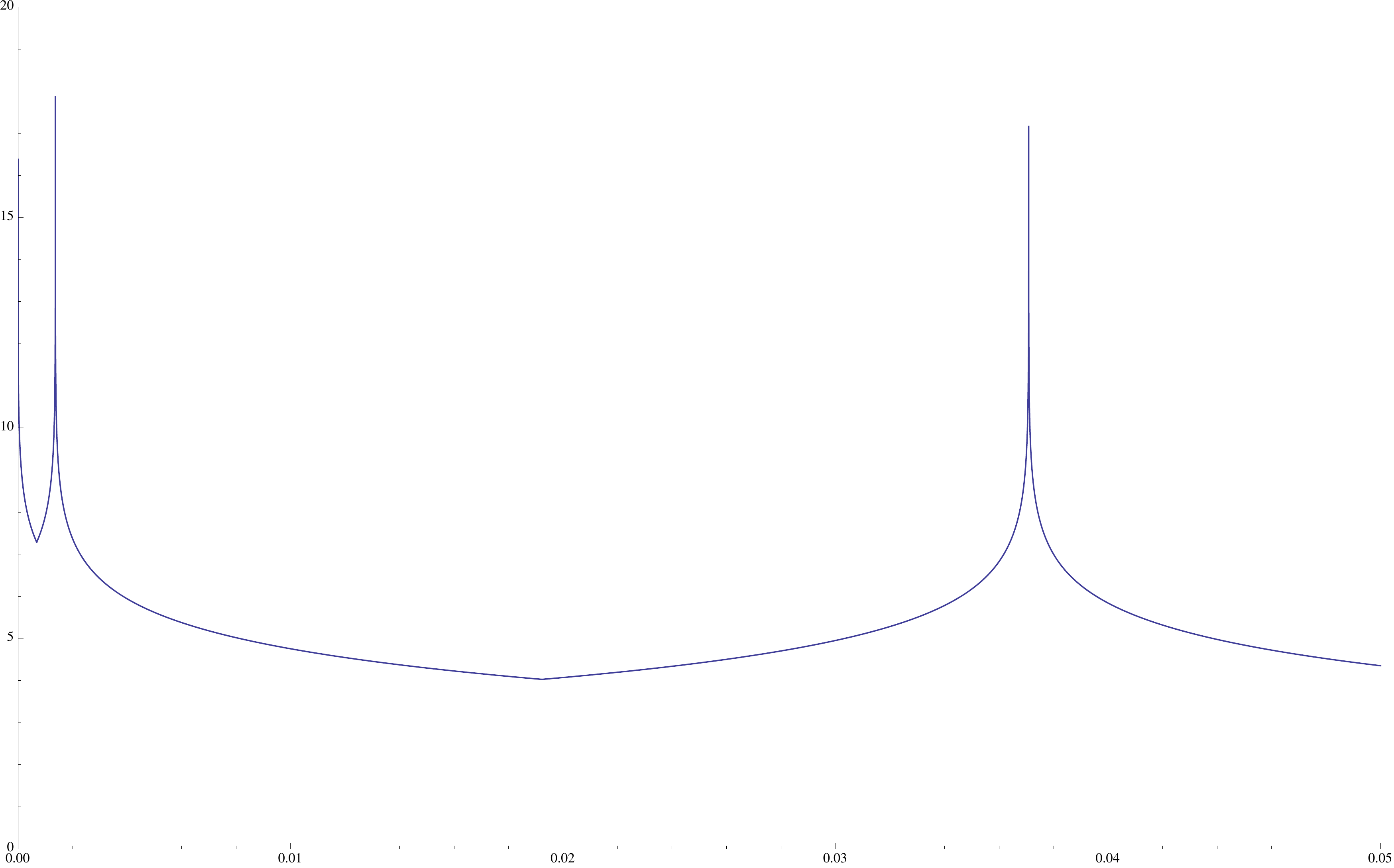}
\includegraphics[width=7cm]{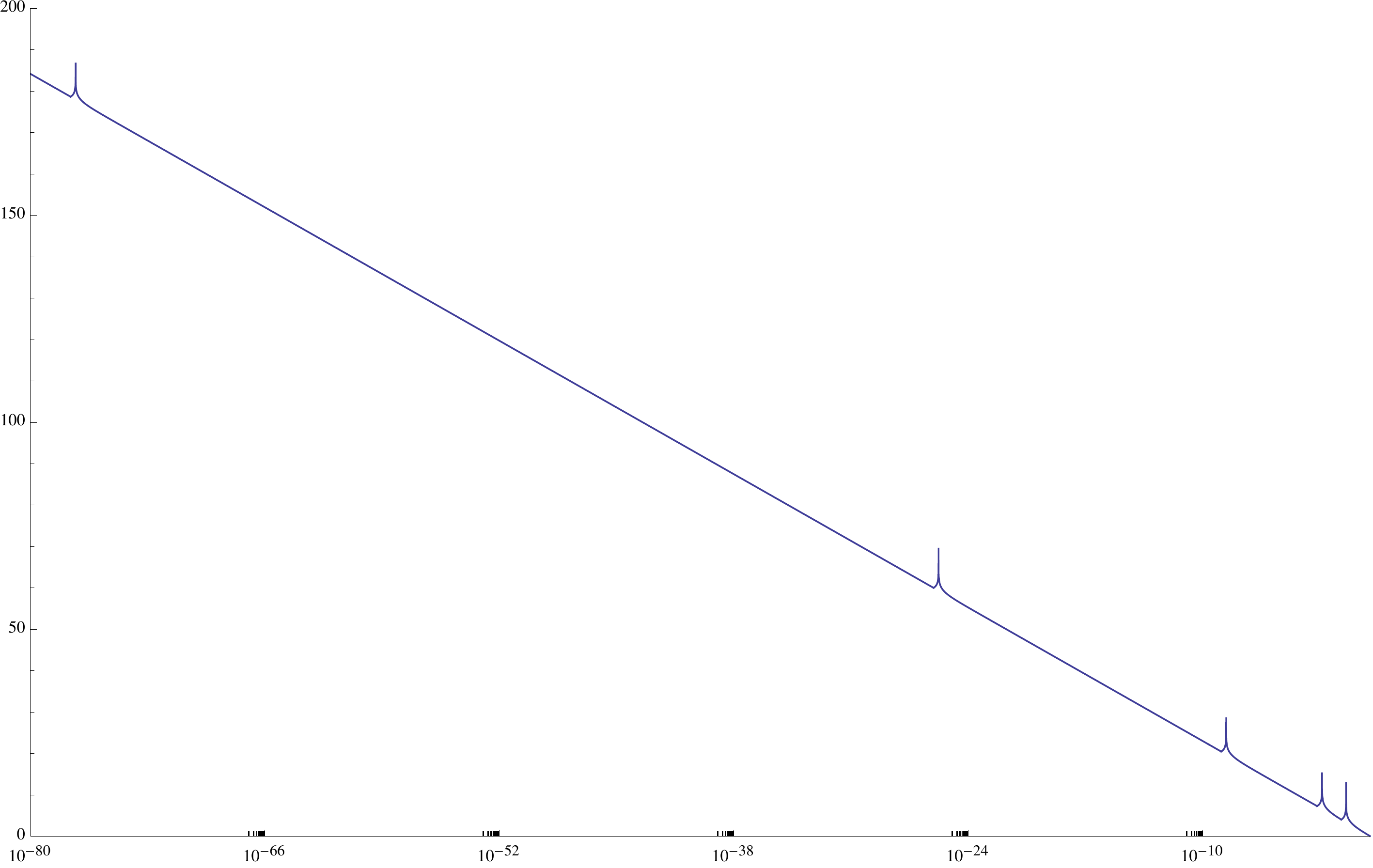}
\caption{The picture on the left is a graph of the observable of Example~\ref{ex2} and the picture on the right is the same graph with a logarithmic scale on the $x$-axis. The spikes correspond to the vertical asymptotes at $\xi_j$.}
\end{figure}

 In this example, we denote $I_j:=\left[\xi_{j+1},\xi_j\right]$, for $j=0,1,2,\ldots$, with $\xi_0:=z$,  and $\xi_j$, $j=1,2,\ldots$,  as in Example  \ref{ex1}. In each interval $I_j$ we have that
the minimum of $\varphi$ occurs in the medium point, \ie  in $\frac{\xi_{j+1}+\xi_j}{2}$.
So, if $\varphi\left(\frac{\xi_{j+1}+\xi_j}{2}\right)>u_n$, then $ \varphi|_{I_j}>u_n$.
Now we observe that $|\xi_{j}-\xi_{j+1}|>|\xi_{j+1}-\xi_{j+2}|$ for all $j=0,1,\dots$. Then
\begin{align*}
\varphi\left(\frac{\xi_{j+1}+\xi_{j+2}}{2}\right)>\varphi\left(\frac{\xi_{j+1}+\xi_{j}}{2}\right) \mbox{ for all $j=0,1,\dots$}
\end{align*}
 For each $n\in\NN$ let $N(n)=\max\{j\in\NN:\xi_j-\xi_{j+1}\geq 2\e^{-u_n}\}$.
So,
$$
 j> N(n)\Rightarrow\varphi\left(\frac{\xi_{j+1}+\xi_{j}}{2}\right)>u_n
$$
and $B_{\e^{-u_n}}(\xi_{j+1})\cap B_{\e^{-u_n}}(\xi_{j})=\emptyset$,
for $j\in\{1,\dots, N(n)\}$. Therefore,
$$
U_n=\left(0,\xi_{N(n)+1}+\e^{-u_n}\right)\cup \left(\cup_{j=1}^{N(n)}B_{\e^{-u_n}}(\xi_j)\right),
$$
and then $U_n$  has finitely many connected components, for each $n\in\NN$.
Let $q_n:=N(n)$ and consider $\AA_{q_n,n}=\left\{x\in U_n:f^{i}(x)\not\in U_n,\;i=1,2,\dots, q_n\right\}$.
By the definition of $N(n)$ and the sequence $(\xi_j)_{j\in\NN}$ we have that
$$
\xi_{N(n)+1}+\e^{-u_n}< \xi_{N(n)}\Rightarrow 3^{N(n)}(\xi_{N(n)+1}+\e^{-u_n})< 3^{N(n)}\xi_{N(n)}<\xi_{N(n)-1}.
$$
It follows that
$$
\left(\bigcup_{j=1}^{N(n)}f^j\left(\left(0,\xi_{N(n)+1}+\e^{-u_n}\right)\right)\right)\cap\left(\bigcup_{j=1}^{N(n)-2}B_{\e^{-u_n}}(\xi_j)\right)=\emptyset,
$$
 and then, $\AA_{q_n,n}\cap\left(0,\xi_{N(n)+1}+\e^{-u_n}\right)$ has at most $3$ connected components, which implies that  $\AA_{q_n,n}$ has at most $2N(n)+3$ and so,
  $\|\I_{\AA_{q_n,n}}\|_{BV}\leq 2(2N(n)+3)< 5N(n)$ .

Now we observe that
$\frac{\xi_{j}-\xi_{j+1}}{2}\leq \frac{\xi_j}{2}\leq\left(\frac{1}{3}\right)^{2\cdot3^j}$. So  $N(n)$ is less or equal to $\left\lceil\log_3\left(\frac{u_n}{2\ln3}\right)\right\rceil$ and
$
\lim_{n\to\infty}\frac{N(n)}{n}=0.
$
Moreover, for $t_n=\sqrt{n},$
\begin{align}\label{eq:dc1}
\nonumber\|\I_{\AA_{q_n,n}}\|_{BV}nr^{t_n}&\leq \left\lceil\log_3\left(\frac{u_n}{2\cdot\ln3}\right)\right\rceil^2 nr^{t_n},\\
\|\I_{\AA_{q_n,n}}\|_{BV}\sum_{j=N(n)}^\infty r^j      &\leq5\left\lceil\log_3\left(\frac{u_n}{2\cdot\ln3}\right)\right\rceil^2\sum_{j=N(n)}^\infty r^j.
\end{align}
By the definition of $u_n$ and by  \eqref{eq:dc1}  we have that $\lim_{n\to\infty}\|\I_{\AA_{q_n,n}}\|_{BV}nr^{t_n}=0$ and \linebreak $\lim_{n\to\infty}\|\I_{\AA_{q_n,n}}\|_{BV}\sum_{j=N(n)}^\infty r^j=0$. 

Now, let $J(n)=\max\{j:3^j\leq N(n)\}$ and notice that
\begin{align*}
\theta_n &=\frac{\mu\left(\AA_{q_n,n}\right)}{\mu(U_n)}=\frac{\mu\left(U_n\cap\AA_{q_n,n}\right)}{\mu(U_n)}=\frac{\mu\left(U_n\right)-\mu\left(U_n\setminus\AA_{q_n,n}\right)}{\mu(U_n)}\\
&=1-\frac{\mu\left(U_n\setminus\AA_{q_n,n}\right)}{\mu(U_n)}
\\
&=1-\frac{3^{-1}(\xi_{N(n)+1}+\e^{-u_n})+2\e^{-u_n}\sum_{j=0}^{J(n)}
3^{3^j-3^{j+1}}}{(\xi_{N(n)+1}+\e^{-u_n})+2\e^{-u_n}(N(n)+1)}.
\end{align*}
Note now that $\xi_{N(n)+1}\leq \e^{-u_n}$ and $\lim_{n\to\infty}\e^{-u_n}=0$. Moreover $J(n)\leq \log_3 N(n)$. So by Theorem \ref{thm:continuous}, the extremal index is given by $\theta =\lim_{n\to\infty}\theta_n=1$, that is,  $\lim_{n\to\infty} \mu(M_n\leq u_n)=\e^{- \tau}$.
\end{example}

In this case, we note that the set $U_n$ has an increasing number ($N(n)$) of connected components being that all of them have the same measure except for the utmost left one, which is at most 3 times larger. Again, as in the previous example, the maximal set takes a long time to recur, which means that the pieces that are extracted from $U_n$ to obtain $A_{q_n,n}$ become superexponentially small. Moreover, since all components have almost the same size, then the relative weight of the largest extractions, which occur in the fastest recurrent components, becomes more and more negligible when compared to the components that up to time $N(n)$ have still not suffered any extraction because they have not recurred, yet. Hence, in the end, the increasing weight of the components with no extractions leads to an EI equal to 1.  

\begin{remark}
We note that, in both examples, the accumulation point $\xi_0=0$ plays a secondary role on the computation  of the EI. In fact, we can easily conclude that if there was only one maximal point at $0$, which is a fixed point, then we would obtain an EI equal to $\theta=1-\frac12=\frac12$, by the formula in \cite[Theorem~3]{FFT12}, for example. This contrasts with the values obtained here for the EI.
\end{remark}

\section*{Acknowledgements}
DA was partially supported by the  grant  346300 for IMPAN from the Simons Foundation and the matching 2015-2019 Polish MNiSW fund. ACMF and JMF were partially supported by FCT projects FAPESP/19805/2014 and PTDC/MAT-CAL/3884/2014. FBR was supported by BREUDS, a Brazilian-European partnership of the FP7-PEOPLE-2012-IRSES program, with project number 318999, which is supported by an FP7 International International Research Staff Exchange Scheme (IRSES) grant of the European Union. All authors were partially supported by CMUP (UID/MAT/00144/2013), which is funded by FCT (Portugal) with national (MEC) and European structural funds through the programs FEDER, under the partnership agreement PT2020.

JMF would like to thank Mike Todd for careful reading and useful suggestions.

\bibliographystyle{amsalpha}
\bibliography{MultipleMaxima}

\end{document}